\newtheorem{theorem}{Theorem}
\newtheorem{corollary}[theorem]{Corollary}
\newtheorem{definition}[theorem]{Definition}
\newtheorem{lemma}[theorem]{Lemma}
\newtheorem{remark}[theorem]{Remark}
\newenvironment{proof}[1][Proof]{\textbf{#1.} }{\ \rule{0.5em}{0.5em}}
\begin{document}

\title{{\LARGE Random eigenvalues from a stochastic heat equation.
\footnote{This research was partially supported by the CONACYT.}}}
\author{  
Carlos G. Pacheco
\thanks{Departamento de Matematicas, CINVESTAV-IPN, A. Postal 14-740, Mexico D.F. 07000, MEXICO. Email: cpacheco@math.cinvestav.mx}}

\maketitle

\begin{abstract}
In this paper we prove the convergence of the eigenvalues of a random matrix that approximates a random Schr\"{o}dinger operator.
Originally, such random operator arises from a stochastic heat equation.  
The proof uses a detailed topological analysis of certain spaces of functions where the operators act.
\end{abstract}

{\bf 2000 Mathematics Subject Classification: 15B52, 47H40}
\\

\textbf{Keywords:} Stochastic heat equation, weak stochastic operator, random matrix, spectrum, eigenvalues.


\section{Stochastic heat model}

Stochatic partial differential equations (SPDE) has grown significantly in applied and pure mathematics.
In particular, the stochastic heat equation is consider a natural model for different phenomena, see e.g. \cite{Walsh}. 
For these reasons it is natural to consider discrete approximation of these models, say, to have a numerical procedure to solve it.
In Pacheco \cite{Pacheco}, it was proposed at random matrix to study a stochastic heat equation (SHE), or more precisely, to approximate the one-dimensional random operator associated to the SHE. 
In that paper it was proved weak convergence using the inner product, which was done by identifying the matrix with a composition using a projection.

In the current paper we prove the convergence of the spectrum, which in this case means the convergence of the eigenvalues.
It is our objective to show how the eigenvalues of the random matrices converges to the spectrum of the stochastic operator.
To do that, we use the variational formulae for eigenvalues of selfadjoint operators to connect with the min-max representation of eigenvalues in the Courant-Fisher theorem.

Let us talk about the stochastic equation and its operator. 
The SPDE that we have in mind is the following,
\begin{eqnarray}
\label{SBVP}
& & \frac{\partial u}{\partial t}=\beta \frac{\partial^{2}u}{\partial x^{2}}+uw^{\prime},\ t>0,\ x\in[0,1],
\end{eqnarray}
where $w^{\prime}$ represents Gaussian space-time noise. 

Then, we could concentrate in the following associated one-dimensional operator, 
\begin{equation}
\label{SchL}
L u:=\beta \frac{d^{2}u}{dx^{2}}+u\times b^{\prime},\ x\in [0,1], 
\end{equation}
where $b^{\prime}$ is a Gaussian white noise on the interval $[0,1]$. 
Operator $L$ is consider to be a random Schr\"{o}dinger operator and one can properly define it using inner products, this is done in Definition \ref{DefL}.

The proposed random matrix in \cite{Pacheco} to approximate $L$ is
$A_{n}:=$
\begin{equation}
\label{Tmatrix1}
\left[
\begin{array}{cccc}
\sqrt{n+1}\xi_{1} -2 \beta (n+1)^{2}   & \beta (n+1)^{2} &             &            \\
\beta (n+1)^{2} &  \sqrt{n+1} \xi_{2} -2 \beta (n+1)^{2}  &   \beta (n+1)^{2}    &    \\
&  &  \ddots &  \\
      &                             & \beta (n+1)^{2} & \sqrt{n+1}\xi_{n} -2 \beta (n+1)^{2}
\end{array}
\right],
\end{equation}
where $\xi_{1},\ldots, \xi_{n}$ are i.i.d. $N(0,1)$ r.v.s.

Loosely speaking, consider the operator $L_{n}= A_{n}P_{n}$ which is the composition of a projection and the random matrix.
The main result in \cite{Pacheco} is the following convergence, as $n\to \infty$, 
\begin{equation*}
\langle L_{n}u,v\rangle\to \langle Lu,v\rangle
\end{equation*}
in mean square for every pair of functions $u$ and $v$.
It turns out that the convergence just described does not imply the convergence of the spectrum.

Here, we are interested in proving convergence of the eigenvalues to the spectrum of $L$. 
In this study, it is not used the composition $L_{n}$, instead we calculate the eigenvalues and check that they approximate the spectrum of $L$. 

We would like to mention that this work was motivated by the one in \cite{Ramirez}, where it is also study the convergence of the eigenvalues of a random matrix to the spectrum of a random operator. 

\subsection{One-dimensional operators}\label{Ss1DimOp}

In this section we properly define the random operator we deal with, this is done following ideas taken from \cite{Skorohod}.

Now, in order to define $L$ in a rigorous way, we first set the space 
\begin{equation}
H_{1}:=\left\{ h\in L^{2}[0,1]: h\text{ absolutely continuous},\ h^{\prime}\in L^{2}[0,1] , \ h(0)=h(1)=0 \right\},
\end{equation}
which is dense (see for instance Example 1.11 of Chapter X in \cite{Conway}) in the Hilbert $H:=L^{2}[0,1]$. 
It is also known (see e.g. \cite{Schmudgen}) that $H_{1}$, with the norm $\|h\|:=\|h\|_{2}+\|h^{\prime}\|_{2}$, is a Sobolev space, which is in fact a separable Hilbert space, and as such it has a countable orthonormal base; we will refer to this base when proving Theorem \ref{TheoremMain}, specifically in Lemma \ref{lemaM}.   

Using integration by parts, we can define $L$ by defining $\langle Lu,v\rangle$, for every $u,v\in H_{1}$. 
Here $\langle \bullet,\bullet \rangle$ stands for the inner product in $L_{2}$ and we will also write $\|\bullet\|$ for the norm in $L_{2}$.

\begin{definition}\label{DefL}
The operator $L$ associated to the expression
\begin{equation}
L u:=\beta \frac{d^{2}u}{dx^{2}}+u\times b^{\prime},\ x\in [0,1], 
\end{equation}
with $b^{\prime}$ being the white noise on $[0,1]$, is defined weakly in the following way. For every $u,v\in H_{1}$
\begin{equation}
\langle Lu,v\rangle:=-\beta\int_{0}^{1}u^{\prime}(x)v^{\prime}(x)dx+\int_{0}^{1}u(x)v(x)dB(x),
\end{equation} 
where $B$ is a Brownian motion on $[0,1]$.
\end{definition}

Another useful way to write $L$, using It\^{o}'s formula, is 
$$\int_{0}^{1}u(x)v(x)dB(x)=-\int_{0}^{1}(u^{\prime}(x)v(x)+u(x)v^{\prime}(x))B(x)dx.$$
In fact, this expression was originally used in \cite{Fuku} to analyze the spectrum.
We extract the following result from \cite{Fuku}.
\begin{theorem} \label{FuNaThm} \textbf{(Fukushima and Nakao (1977)).} Consider the one-dimensional random Schr\"{o}dinger operator
\begin{equation}\label{L0}
L_{0}:= -\frac{d^{2}}{dx^{2}} + b^{\prime},
\end{equation}
defined weakly as follows. For every $u,v \in H_{1}$,
\begin{equation*}
\langle L_{0} u,v\rangle:= \int_{0}^{1}u^{\prime}(x)v^{\prime}(x)dx-\int_{0}^{1}\{u^{\prime}(x)v(x)+u(x)v^{\prime}(x)\}B(x)dx.
\end{equation*}
Then $L_{0}$ has a discrete spectrum $\{\lambda_{1},\lambda_{2},\ldots\}$ and it can be calculated as
\begin{equation}
\label{eigenHk}
\lambda_{k}=\inf_{M_{1}\subset H_{1}\atop dim(M_{1})=k}\sup_{v\in M_{1}\atop \|v\|=1}\langle L_{0} v, v\rangle.
\end{equation}
\end{theorem}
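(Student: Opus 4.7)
The plan is to proceed via the Friedrichs extension of the quadratic form appearing in the definition of $L_0$, together with compactness of the Sobolev embedding and the standard Courant--Fisher min-max theorem for self-adjoint operators with compact resolvent.

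First I would set $q(u,v):=\int_{0}^{1}u'v'\,dx-\int_{0}^{1}\{u'v+uv'\}B\,dx$ on $H_{1}\times H_{1}$, which is the definition of $\langle L_{0}u,v\rangle$ in the statement. The pathwise continuity of $B$ on $[0,1]$ gives, almost surely, $\|B\|_{\infty}<\infty$, so that the second term is controlled by $\|B\|_{\infty}(\|u'\|\,\|v\|+\|u\|\,\|v'\|)$ and $q$ is a well-defined symmetric bilinear form on $H_{1}$. The key estimate is the semiboundedness of $q$: for every $\varepsilon\in(0,1)$, an AM-GM application yields
\begin{equation*}
q(u,u)\;\geq\;(1-\varepsilon)\|u'\|^{2}-\varepsilon^{-1}\|B\|_{\infty}^{2}\|u\|^{2},
\end{equation*}
so $q(u,u)+C\|u\|^{2}\geq(1-\varepsilon)\|u'\|^{2}$ for a random constant $C=C(\omega)$. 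In particular $q+C\|\cdot\|^{2}$ is equivalent (almost surely) to the $H_{1}$-norm squared, which shows that $q$ is closed on $H_{1}$ in the sense of quadratic forms on $H=L^{2}[0,1]$.

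Next I would invoke the representation theorem (Friedrichs extension) to produce a unique self-adjoint operator $L_{0}$ in $H$, with form domain $H_{1}$, satisfying $\langle L_{0}u,v\rangle=q(u,v)$ for $u$ in the operator domain and $v\in H_{1}$. Discreteness of the spectrum then follows from the fact that the embedding $H_{1}\hookrightarrow H$ is compact (a one-dimensional Rellich--Kondrachov argument, or directly by Arzel\`a--Ascoli via $|h(x)-h(y)|\leq|x-y|^{1/2}\|h'\|_{2}$): combined with the semiboundedness, this forces $(L_{0}+C\,I)^{-1}$ to be a compact positive self-adjoint operator, hence $L_{0}$ has a pure point spectrum accumulating only at $+\infty$. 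The formula \eqref{eigenHk} is then just the min-max principle of Courant--Fisher applied to $L_{0}$, rewritten in terms of the form $q$, using the fact that $H_{1}$ is a core for the form so that the infimum over $k$-dimensional subspaces of $\mathrm{Dom}(L_{0})$ coincides with the infimum over $k$-dimensional subspaces of $H_{1}$.

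The main obstacle, and the step that is really \emph{stochastic} rather than functional-analytic boilerplate, is the semiboundedness estimate: all the Friedrichs/min-max machinery is standard once one knows $q$ is a closed, symmetric, semibounded form on $H$ with form domain $H_{1}$. The rough nature of white noise makes this nontrivial if one tries to treat $b'$ directly, which is why the It\^o reformulation $\int uv\,dB=-\int(u'v+uv')B\,dx$ is essential: it trades the distributional potential $b'$ for the continuous path $B$, after which the bound reduces to Cauchy--Schwarz in $L^{2}$ and Young's inequality. A secondary technical point to verify carefully is that the $H_{1}$-norm used for closedness of $q$ is equivalent (pathwise) to the form norm, which is exactly what the inequality above provides.
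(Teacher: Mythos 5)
Your proposal is correct, but note that the paper itself offers no proof of this theorem: it is imported verbatim from Fukushima and Nakao (1977), so there is nothing internal to compare against. Your sketch is essentially a faithful reconstruction of the standard argument in that reference: the It\^{o}/integration-by-parts rewriting to trade $b^{\prime}$ for the continuous path $B$, the Young-inequality semiboundedness estimate $q(u,u)\geq(1-\varepsilon)\|u^{\prime}\|^{2}-\varepsilon^{-1}\|B\|_{\infty}^{2}\|u\|^{2}$ giving a closed semibounded form with form domain $H_{1}$, the Friedrichs representation theorem, compactness of $H_{1}\hookrightarrow L^{2}[0,1]$ for discreteness of the spectrum, and the form version of the Courant--Fisher min-max principle. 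The one point worth stating explicitly if you write this out in full is that for $u\in H_{1}$ (vanishing at the endpoints) the Poincar\'{e} inequality makes $\|u^{\prime}\|_{2}$ alone equivalent to the $H_{1}$-norm, which is what upgrades your lower bound to genuine closedness of the form.
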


We can adapt previous result to obtain the
\begin{corollary} 
$L$ has a discrete spectrum $\{\lambda_{1},\lambda_{2},\ldots\}$, which can be obtained through
\begin{equation}
\label{eigenvk}
\lambda_{k}\overset{(d)}{=}-  \inf_{M_{1}\subset H_{1}\atop dim(M_{1})=k}\sup_{v\in M_{1}\atop \|v\|=1}
\left\{ \beta \int_{0}^{1}(v^{\prime}(x))^{2}dx+\int_{0}^{1}v^{2}(x)dB(x)\right\}.
\end{equation}
\end{corollary}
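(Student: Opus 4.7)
The plan is to apply Theorem~\ref{FuNaThm} to the operator $-L$ and then exploit the distributional symmetry of Brownian motion to recover the variational formula in the stated form.

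First, I would compute the quadratic form $\langle -Lv, v\rangle$ for $v \in H_1$ directly from Definition~\ref{DefL} together with the integration-by-parts identity $\int_0^1 v^2\,dB = -\int_0^1 2 v\, v'\, B\,dx$ stated just after Definition~\ref{DefL}:
\[
\langle -Lv, v\rangle = \beta \int_0^1 (v'(x))^2\,dx - \int_0^1 v^2(x)\,dB(x).
\]
Up to the positive multiplicative constant $\beta$ in front of the Dirichlet integral, this has exactly the structure of $\langle L_0 v, v\rangle$ in Theorem~\ref{FuNaThm}: a positive Dirichlet quadratic form on $H_1$ perturbed by a first-order noise term built from the Brownian path. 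The Fukushima--Nakao machinery (form-compactness of the noise perturbation relative to the Dirichlet form on $H_1$, and the resulting Courant--Fisher min-max characterization) is insensitive to the value of the diffusion coefficient, so it applies to $-L$ with only cosmetic changes. This gives a discrete spectrum $\{\mu_k\}$ of $-L$ accumulating at $+\infty$, satisfying
\[
\mu_k = \inf_{M_1 \subset H_1,\,\dim(M_1)=k}\ \sup_{v \in M_1,\,\|v\|=1}\left\{\beta \int_0^1 (v')^2\,dx - \int_0^1 v^2\,dB\right\}.
\]

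Next, since $\sigma(L) = -\sigma(-L)$, that is $\lambda_k = -\mu_k$, I would immediately read off
\[
\lambda_k = -\inf_{M_1 \subset H_1,\,\dim(M_1)=k}\ \sup_{v \in M_1,\,\|v\|=1}\left\{\beta \int_0^1 (v')^2\,dx - \int_0^1 v^2\,dB\right\}.
\]
To convert this into (\ref{eigenvk}) I would invoke the distributional symmetry $-B \overset{(d)}{=} B$ as processes on $[0,1]$. Under $B \mapsto -B$ the random functional $v \mapsto \int_0^1 v^2\,dB$ maps to its negative simultaneously for every $v \in H_1$, so the inf-sup above agrees in law with the same expression after the sign on $\int v^2\,dB$ is flipped. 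This yields (\ref{eigenvk}) and also explains why the corollary is stated as an equality in distribution rather than a pathwise identity.

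The one point that genuinely needs attention is the claim that Fukushima--Nakao passes verbatim to $-L$ for arbitrary $\beta > 0$. Morally one absorbs $\beta$ into a rescaling of the Dirichlet form, but the form-compactness estimate, the identification of the form domain with $H_1$, and the self-adjointness of the associated operator all have to be rechecked with the $\beta$-weighted Dirichlet integral. I expect this to go through without substantive change since $\beta$ only shifts constants in the coercivity inequality, but it is the only nontrivial verification; the remaining steps (taking negatives, applying $B \overset{(d)}{=} -B$) are routine.
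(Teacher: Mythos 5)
Your proposal is correct and follows essentially the same route as the paper: pass to the negated operator, invoke the Fukushima--Nakao min-max characterization, and use the sign-symmetry of the Gaussian noise ($B\overset{(d)}{=}-B$, equivalently $-w'\overset{(d)}{=}w'$) to flip the sign of $\int v^2\,dB$, which is also why the identity is only distributional. The one point you flag as genuinely needing verification --- that Fukushima--Nakao survives the $\beta$-weighted Dirichlet form --- is precisely what the paper's proof avoids: instead of keeping $\beta$ on the Dirichlet integral, it considers $-\frac{1}{\beta}L$, which has the exact form $-\frac{d^2}{dx^2}+b'$ of Theorem~\ref{FuNaThm} except that the noise now comes from a Brownian motion $B_\beta$ of variance $\beta^{-2}$; since a rescaled Brownian motion is still a Brownian motion, the theorem applies with no re-examination of the form-compactness or coercivity estimates, and one recovers (\ref{eigenvk}) from $\lambda_k\overset{(d)}{=}-\beta\eta_k$ together with $\beta B_\beta\overset{(d)}{=}B$. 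If you adopt that rescaling, your remaining concern disappears and the two arguments coincide.
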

\begin{proof}
Notice that
\begin{eqnarray*}
-\frac{1}{\beta}Lf&=&-\frac{d^{2}}{dx^{2}}f-fw^{\prime}\\
&\overset{(d)}{=}& -\frac{d^{2}}{dx^{2}}f+fw^{\prime}.
\end{eqnarray*}
That is, for all $u,v\in H_{1}$, 
$$-\frac{1}{\beta}\langle Lu,v\rangle\overset{(d)}{=}\langle H_{\beta}u,v\rangle,$$ 
where $H_{\beta}$ is defined as $H$ but with a Brownian motion $B_{\beta}$ with variance $\beta^{-2}$. 
Then, the eigenvalues of $L$ can be calculated as those of $H_{\beta}$. Theorem \ref{FuNaThm} can be stated for $H_{\beta}$, and the eigenvalues of $H_{\beta}$ become  
\begin{equation*}
\eta_{k}=\inf_{M_{1}\subset H_{1}\atop dim(M_{1})=k}\sup_{v\in M_{1}\atop \|v\|=1}
\int_{0}^{1}(v^{\prime}(x))^{2}dx+\int_{0}^{1}v^{2}(x)dB_{\beta}(x).
\end{equation*}
Which defines in distributions the eigenvalues of $L$ as $\lambda_{k}\overset{(d)}{=}-\beta \eta_{k}$, i.e. formula (\ref{eigenvk}).
\end{proof}


\subsection{Weak convergence}\label{SubSWC}

Originally, the matrix $A_{n}$ came from the matrix
\begin{equation*}
A_{n}:=\left[
\begin{array}{cccc}
\frac{X_{1}^{(n)}}{\Delta x}-2 \beta \frac{1}{(\Delta x)^{2}}   & \beta \frac{1}{(\Delta x)^{2}} &             &            \\
\beta \frac{1}{(\Delta x)^{2}} & \frac{X_{2}^{(n)}}{\Delta x}-2 \beta \frac{1}{(\Delta x)^{2}}  &   \beta \frac{1}{(\Delta x)^{2}}    &    \\
&  &  \ddots &  \\
      &                             & \beta \frac{1}{(\Delta x)^{2}} & \frac{X_{n}^{(n)}}{\Delta x}-2 \beta \frac{1}{(\Delta x)^{2}}
\end{array}
\right].
\end{equation*}
Here, $\Delta x= 1/(n+1)$ is the size partition of the space, and $X_{i}^{(n)}$ are independent normal random variables with mean $0$ and variance $\Delta x$.
Moreover, if $u:[0,1]\to \mathbb{R}$ is some well behaved function, we can construct the vector $v:=(u(x_{1}),\ldots, u(x_{n}))^{T}$, where $\{x_{j}, j=1,\ldots,n\}$ represents the partition of the interval $[0,1]$ in subintervals of size $\Delta x$; $T$ means the traspose.
Then, the $i$ entry of the multiplication $A_{n}v$ is given by 
\begin{equation}
\label{An}
[A_{n}u]_{i}:=
\beta \frac{u(x_{i+1})-2u(x_{i})+u(x_{i-1})}{(\Delta x)^{2}} +u(x_{i}) \frac{X_{i}^{(n)}}{\Delta x}.
\end{equation}

Let us give precisely the specifications of the partitions.
Let $\Delta^{(n)}=1/(n+1)$ and $\Pi_{n}:=\{ x_{0}^{(n)}, x_{1}^{(n)},\ldots , x_{n+1}^{(n)}\}, \ n=1,2,\ldots$ be a sequence of partitions such that $|x^{(n)}_{k+1}-x^{(n)}_{k}|=1/(n+1)$, with $x_{0}^{(n)}=0$ and $x_{n+1}^{(n)}=1$. 
Thus $u(x_{0}^{(n)})=u(x_{n+1}^{(n)})=0$ for every $n\geq 1$ and $u\in H_{1}$. 
Now, let $H_{1}^{(n)}\subset H_{1}$ be the linear subspace of stepwise functions $v$ which are constant on the intervals $[x_{i}^{(n)},x_{i+1}^{(n)})$, $i=0,1,\ldots, n$, 
and let 
$$P_{n}:H_{1}\to H_{1}^{(n)}$$ 
be the projection that associates to any function $u\in H_{1}$ a stepwise right continuous function $u_{n}$ that takes the values $u(x_{k}^{(n)})$, i.e. 
\begin{equation*}
u_{n}(x):=\left\{
\begin{array}{cl}
0 & x\in [0,x_{1}^{(n)})\\
u(x_{i}^{(n)}) & x\in [x_{i}^{(n)},x_{i+1}^{(n)}), i=1,\ldots, n\\
0 &  x=x_{n+1}^{(n)}=1.
\end{array}
\right.
\end{equation*}
We now consider the operator $L_{n}:=A_{n} P_{n}$ whose image in $H_{1}^{(n)}$ are stepwise function coming from multiplying the matrix $A_{n}$ to the $n$-vector associated to $u_{n}:=P_{n}u$, with $u\in H_{1}$ at points 
$x_{1}^{(n)},\ldots, x_{n}^{(n)}$. 
That is, if $v_{n}:=(u_{n}(x_{1}^{(n)}),\ldots, u_{n}(x_{n}^{(n)}))^{T}$, then $L_{n}u$ is the step functions whose constant values are $A_{n}v_{n}$.

What it is now known is the
\begin{theorem}
\label{ThWC} (Pacheco \cite{Pacheco})
For every pair $u,v\in H_{1}$, $\langle L_{n}u,v\rangle\overset{n\to \infty}{\to} \langle Lu,v\rangle$ in mean square.
\end{theorem}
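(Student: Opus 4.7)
My plan is to exploit the explicit step-function form of $L_n u$: since $L_n u$ takes the constant value $[A_n v_n]_i$ on each subinterval $[x_i^{(n)}, x_{i+1}^{(n)})$, the inner product expands as
$$\langle L_n u, v\rangle = \beta\sum_{i=1}^n \frac{u(x_{i+1}^{(n)})-2u(x_i^{(n)})+u(x_{i-1}^{(n)})}{(\Delta^{(n)})^2}\int_{x_i^{(n)}}^{x_{i+1}^{(n)}} v(x)\,dx \;+\; \sum_{i=1}^n u(x_i^{(n)})\,\tilde v_i\, X_i^{(n)},$$
where $\tilde v_i := (\Delta^{(n)})^{-1}\int_{x_i^{(n)}}^{x_{i+1}^{(n)}}v(x)\,dx$. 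Call these two pieces $I_n$ (deterministic) and $J_n$ (stochastic). Mean-square convergence then splits into showing $I_n \to -\beta\int_0^1 u'v'\,dx$ as a deterministic limit and $J_n \to \int_0^1 uv\,dB$ in mean square; the cross term is harmless because $J_n-\int uv\,dB$ has mean zero.

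For the stochastic part, I would realize the $X_i^{(n)}$ as Brownian increments $B(x_{i+1}^{(n)}) - B(x_i^{(n)})$, so that $J_n$ becomes the It\^{o} integral of the step function $\phi_n(x) := u(x_i^{(n)})\tilde v_i$ on $[x_i^{(n)}, x_{i+1}^{(n)})$. By It\^{o}'s isometry,
$$\mathbb{E}\bigl[J_n - \textstyle\int_0^1 uv\,dB\bigr]^2 = \|\phi_n - uv\|_{L^2[0,1]}^2,$$
and this vanishes by dominated convergence: $u,v$ are continuous on $[0,1]$ by the Sobolev embedding $H_1 \hookrightarrow C[0,1]$, so $\phi_n\to uv$ pointwise with a uniform bound.

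For the deterministic part, I would apply summation by parts to $I_n$, transferring one finite difference from $u$ onto the averages $\tilde v_i$, producing
$$I_n = \beta\,\frac{T_n \tilde v_n - T_0 \tilde v_1}{\Delta^{(n)}} \;-\; \beta\sum_{i=1}^{n-1} D_i\,(\tilde v_{i+1}-\tilde v_i),$$
with $T_i := u(x_{i+1}^{(n)})-u(x_i^{(n)})$ and $D_i := T_i/\Delta^{(n)}$. The interior sum is the $L^2$-inner product of two step-function smoothings of $u'$ and $v'$; both smoothings converge in $L^2[0,1]$ to $u'$ and $v'$, so continuity of the $L^2$ pairing gives convergence to $\int_0^1 u'v'\,dx$. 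The boundary terms vanish using the Dirichlet conditions $u(0)=u(1)=0=v(0)=v(1)$ together with the Sobolev estimate $|u(x)| \le \sqrt{x}\,\|u'\|_2$ near $0$ (and symmetric versions near $1$): each of $T_0, T_n, \tilde v_1, \tilde v_n$ is controlled by an $L^2$ norm of $u'$ or $v'$ on a shrinking boundary interval, and together the two such factors absorb the $1/\Delta^{(n)}$ and more.

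The main obstacle will be this summation-by-parts step: since $u,v \in H_1$ have only $L^2$ derivatives, Taylor expansions are unavailable and the convergence of the interior sum demands honest $L^2$-convergence of the two mollifications of $u'$ and $v'$; similarly, the boundary terms vanish only once one uses the Sobolev-type pointwise estimates above rather than crude bounds. A density reduction to smooth functions compactly supported in $(0,1)$, combined with a uniform-in-$n$ bound $|\langle L_n u, v\rangle|^2 \le C(\|u\|_{H_1},\|v\|_{H_1})$ in mean square coming from Cauchy--Schwarz and the Itô isometry, is a natural fall-back to clean up the general case.
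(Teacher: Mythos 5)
The paper does not prove this statement at all: Theorem \ref{ThWC} is imported verbatim from the reference \cite{Pacheco}, so there is no in-document proof to compare against. Judged on its own, your argument is sound and complete in outline. The decomposition into the deterministic difference-quotient piece $I_n$ and the stochastic piece $J_n$ is exactly the right split; the It\^{o}-isometry reduction of $J_n$ to $\|\phi_n-uv\|_{L^2}^2$ works because $H_{1}$ embeds into $C[0,1]$ (indeed into H\"{o}lder-$1/2$), and your summation by parts plus the estimate $|u(x)|\le\sqrt{x}\,\|u'\|_{L^{2}[0,x]}$ correctly kills the boundary terms, with the interior sum converging as a pairing of two $L^2$-contractive mollifications of $u'$ and $v'$. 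Two points worth making explicit if you write this up: first, ``mean square'' convergence only makes sense after you couple the matrix noise to the operator noise, i.e.\ you must \emph{define} $X_i^{(n)}:=B(x_{i+1}^{(n)})-B(x_i^{(n)})$ for the same Brownian motion $B$ appearing in Definition \ref{DefL} (you do this, but it is a modelling choice, not a consequence of the distributional assumption $X_i^{(n)}\sim N(0,\Delta x)$); second, your fall-back via density of smooth compactly supported functions is unnecessary, since the direct argument already handles general $u,v\in H_1$ --- the only place absolute continuity of the integral is genuinely needed is in showing $\|u'\|_{L^{2}[0,\Delta x]}\to 0$ for the boundary terms.
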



\subsection{Convergence of eigenvalues}

In Theorem \ref{ThWC}, using a projection and the matrix $A_{n}$ it is consider a new operator $L_{n}$ to prove weak convergence. 
Now, we simply want to calculate the eigenvalues of the matrix and see if they converge somehow to the spectrum of $L$.

\begin{theorem}
\label{TheoremMain}
The $k-$th eigenvalue $\lambda_{k}^{(n)}$ of $A_{n}$ converges in distribution as $n\to \infty$ to the $k-$th eigenvalue of $L$.
\end{theorem}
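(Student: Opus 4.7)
The plan is to prove convergence of the Courant--Fisher min-max values on a common probability space, coupling the matrix and operator noises through $\xi_i = (B(x_{i+1}^{(n)}) - B(x_i^{(n)}))/\sqrt{\Delta^{(n)}}$. For a vector $v = (v_1,\ldots,v_n)\in\mathbb{R}^n$, I would associate the step function $u_n(x) = v_i/\sqrt{\Delta^{(n)}}$ on $[x_i^{(n)}, x_{i+1}^{(n)})$ (whose $L^2$-norm equals $|v|$) and the piecewise-linear interpolant $\tilde u_n$ of the same nodal values. A summation by parts shows
\begin{equation*}
\langle A_n v, v\rangle = -\beta \int_0^1 (\tilde u_n'(x))^2\, dx + \int_0^1 u_n(x)^2\, dB(x),
\end{equation*}
so Courant--Fisher applied to $-A_n$, together with the sign symmetry $B\overset{(d)}{=}-B$ that underlies the Corollary, gives a discrete analogue of (\ref{eigenvk}) for $-\lambda_k^{(n)}$ identical in form to the expression for $-\lambda_k$, but with the min taken over $k$-dimensional subspaces of $\mathbb{R}^n$ in place of $H_1$. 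The theorem thus reduces to showing that this discrete min-max converges to $-\lambda_k$, which I would establish as two matching one-sided inequalities.

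For $\liminf_n \lambda_k^{(n)} \geq \lambda_k$, fix $\varepsilon>0$ and choose a near-optimal $k$-dimensional $M\subset H_1$ with smooth basis $\{\phi_1,\ldots,\phi_k\}\subset C_c^\infty(0,1)$, using density of $C_c^\infty$ in $H_1$. Discretize by setting $\phi_j^{(n)} := \sqrt{\Delta^{(n)}}(\phi_j(x_i^{(n)}))_{i=1}^n\in\mathbb{R}^n$ and $M^{(n)} := \mathrm{span}(\phi_1^{(n)},\ldots,\phi_k^{(n)})$; for $n$ large this subspace has dimension $k$. Since the unit ball of $M$ is compact and each $\phi_j$ is smooth, Riemann-sum convergence of the Dirichlet energy and $L^2(\Omega)$-continuity of $u\mapsto\int u^2\,dB$ on $H_1$-bounded sets yield uniform convergence of the discrete quadratic form to its continuum counterpart on the unit spheres. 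Consequently the max over $M^{(n)}$ converges in probability to the sup over $M$, which is at most $-\lambda_k+\varepsilon$; sending $\varepsilon\to 0$ gives the bound.

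For the reverse inequality, for each $n$ take an optimal $M_n^\ast\subset\mathbb{R}^n$ realizing the matrix min, and let $\widetilde M_n\subset H_1$ denote the span of the piecewise-linear interpolants $\tilde v$ of the vectors $v\in M_n^\ast$. Since $-\lambda_k^{(n)}$ is already bounded above with high probability by the previous step, every unit $v\in M_n^\ast$ satisfies
\begin{equation*}
\beta\int_0^1(\tilde v'(x))^2\, dx \leq C + \int_0^1 u_n^v(x)^2\, dB(x),
\end{equation*}
where $u_n^v$ is the step-function partner of $v$. The \textbf{main obstacle} is to extract from this a uniform $H_1$-bound on $\widetilde M_n$: the Wiener integral on the right is Gaussian with variance $\|(u_n^v)^2\|_{L^2}^2 \leq C\|\tilde v\|_{H_1}^4$ via the Sobolev embedding $H_1\hookrightarrow L^\infty$, which can in principle be absorbed into the Dirichlet energy on the left-hand side, but the concentration argument is delicate since $v$ itself depends on $B$. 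Once the $H_1$-bound is in hand, Lemma~\ref{lemaM} (which leverages the countable orthonormal base of $H_1$ to supply Grassmannian compactness) extracts a subsequential limit $M_\infty\subset H_1$ of dimension $k$; lower semicontinuity of the Dirichlet energy and $H_1$-continuity of the Wiener integral then give $-\lambda_k\leq \sup_{u\in M_\infty,\,\|u\|=1}\{\beta\int(u')^2 + \int u^2\, dB\} \leq \liminf_n(-\lambda_k^{(n)})$, completing the proof.
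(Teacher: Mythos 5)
Your route is genuinely different from the paper's. The paper does not argue via two one-sided inequalities on a coupled probability space; instead it rewrites the discrete Courant--Fisher min-max directly as an inf over $k$-dimensional subspaces $M_{1}\subset H_{1}$ evaluated at partition points (Lemma~\ref{LemaFn}), proves pointwise mean-square convergence $F_{n}(g_{n})\to F(g)$ along homeomorphic parametrizations of the constraint spheres by $S^{k-1}$ (Lemma~\ref{LemaSup}), equips the family $\mathcal{M}$ of $k$-dimensional subspaces with a relatively compact, separable topology (Lemma~\ref{lemaM}), and then upgrades pointwise convergence to convergence of $\sup$ and then of $\inf$ by Cantor diagonal extraction of almost surely convergent subsequences, concluding convergence in distribution by a subsequence principle. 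Your coupling-plus-squeeze strategy would, if completed, yield the stronger statement of convergence in probability on the coupled space, and your ``easy'' direction (discretizing a near-optimal smooth subspace and using uniform Riemann-sum convergence on its compact unit sphere) is sound.

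However, the proof as written is incomplete, and you say so yourself: the reverse inequality hinges on a uniform $H_{1}$-bound for the optimal discrete subspaces $M_{n}^{\ast}$, and the concentration argument you sketch (Gaussian variance of $\int u^{2}\,dB$ at fixed $u$) genuinely fails because $M_{n}^{\ast}$ is measurable with respect to the noise, so you cannot evaluate the Wiener integral's law at the random argument. Without that bound the compactness extraction and the lower-semicontinuity step have nothing to act on, so the second inequality is not established and the theorem is not proved. The gap is fillable, but by a different mechanism than the one you propose: integrate by parts pathwise, $\int_{0}^{1}u^{2}\,dB=-2\int_{0}^{1}u(x)u^{\prime}(x)B(x)\,dx$, so that $\bigl|\int u^{2}\,dB\bigr|\leq 2\sup_{x}|B(x)|\,\|u\|_{2}\|u^{\prime}\|_{2}\leq \tfrac{\beta}{2}\|u^{\prime}\|_{2}^{2}+C(B)\|u\|_{2}^{2}$ by Young's inequality, with $C(B)$ random but finite and independent of $u$; the discrete analogue via summation by parts then absorbs the noise term into half the Dirichlet energy and yields the deterministic-in-$u$ bound $\beta\int(\tilde v^{\prime})^{2}\leq 2(C+C(B))$ you need. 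This form-boundedness of the white-noise potential relative to the Laplacian is exactly the mechanism in Fukushima--Nakao, and replacing your concentration sketch with it would close the argument.
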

To obtain the result, without loss of generality let us take $\beta=1$, hence it will suffice to prove the result for $-A_{n}$ and $L_{0}$ in (\ref{L0}).
To facilitate notation we will omit superscript $(n)$ in the partition and size-partition.



\begin{lemma}\label{LemaFn} The eigenvalue $\lambda_{k}^{(n)}$ admits the following representation,
\begin{equation}\label{EqFn}
\lambda_{k}^{(n)}\overset{(d)}{=}
\inf_{M_{1}\subset H_{1}\atop dim(M_{1})=k}\sup_{g\in M_{1}\atop 1=\sum_{i=1}^{n} g(x_{i})^{2}\Delta x} F_{n}(g),
\end{equation}
where 
\begin{eqnarray}\label{Fn}
\nonumber F_{n}(g)&:=& 
\sum_{i=1}^{n} \left( \frac{ g(x_{i+1})- g(x_{i}) }{\Delta x}\right)^{2}\Delta x + \sum_{i=1}^{n} g(x_{i})^{2}X_{i} \\ 
& + & \frac{g(x_{1})-g(x_{0})}{\Delta x}g(x_{1})+ \frac{g(x_{n+1})-g(x_{n})}{\Delta x}g(x_{n}),
\end{eqnarray}
where $\{x_{0},\ldots,x_{n+1}\}=\Pi_{n+1}$.
\end{lemma}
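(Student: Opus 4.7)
The plan is to apply the Courant-Fischer min-max theorem to the real symmetric $n\times n$ matrix $-A_n$, translate the matrix quadratic form into a discrete functional of the shape $F_n$, and then match $k$-dimensional subspaces of $\mathbb{R}^n$ with $k$-dimensional subspaces of $H_1$ through the grid evaluation map $\mathrm{ev}\colon g\mapsto (g(x_1),\ldots,g(x_n))$.

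First, I would write the Courant-Fischer formula for $-A_n$ on $\mathbb{R}^n$,
\[
\lambda_k^{(n)} = \min_{V\subset\mathbb{R}^n\atop \dim V=k}\; \max_{v\in V\setminus\{0\}} \frac{\langle -A_n v, v\rangle}{\|v\|^2},
\]
and rescale so that the normalization becomes $\sum_{i=1}^n v_i^2 \Delta x = 1$, which simply replaces the numerator by $\Delta x\,\langle -A_n v, v\rangle$. Using the explicit tridiagonal form of $-A_n$ together with a discrete integration by parts (under the Dirichlet convention $v_0 = v_{n+1}=0$ built into the matrix), the quadratic form expands as a discrete Dirichlet energy $\sum_{i=1}^n \bigl(\tfrac{v_{i+1}-v_i}{\Delta x}\bigr)^2 \Delta x$, plus two boundary contributions at $x_0$ and $x_{n+1}$, and a potential term of the shape $-\sum_i X_i v_i^2$. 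Setting $v_i = g(x_i)$ and invoking the symmetry $X_i \overset{(d)}{=} -X_i$ to flip the sign of the potential would then identify the expression with $F_n(g)$.

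The remaining step is to upgrade the infimum over subspaces $V\subset \mathbb{R}^n$ to an infimum over subspaces $M_1 \subset H_1$. Since $F_n(g)$ depends on $g$ only through its grid values, the functional factors through $\mathrm{ev}$. The evaluation map is surjective --- piecewise linear interpolation through the points $(x_0,0), (x_1,v_1),\ldots,(x_n,v_n), (x_{n+1},0)$ is an explicit right inverse landing in $H_1$ --- so every $k$-dimensional $V\subset \mathbb{R}^n$ is realised as $\mathrm{ev}(M_1)$ for some $M_1\subset H_1$ with $\dim M_1 = k$. I expect the main obstacle to lie in the opposite direction: a generic $k$-dimensional $M_1$ may contain nonzero functions vanishing on the entire grid, so that $\dim\mathrm{ev}(M_1)<k$ and the constrained supremum of $F_n$ over $M_1$ could drop. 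Verifying that such degenerate subspaces do not pull the outer infimum below the Courant-Fischer value is the key analytic step; it amounts to arguing that one may perturb any degenerate $M_1$ into an evaluation-injective one without increasing the inner sup, so that the infimum over $H_1$-subspaces coincides with the infimum over $\mathbb{R}^n$-subspaces.
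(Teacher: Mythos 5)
Your first three steps --- Courant--Fischer for $-A_{n}$, the rescaling $u(i)=\tilde{u}(i)\sqrt{\Delta x}$ that turns the normalization into $\sum_{i}\tilde{u}(i)^{2}\Delta x=1$, the discrete summation by parts producing the Dirichlet energy plus the two boundary terms, and the sign flip via $X_{i}\overset{(d)}{=}-X_{i}$ --- are exactly the paper's argument. Where you diverge is at the end: the paper passes from subspaces of $\mathbb{R}^{n}$ to subspaces of $H_{1}$ by simply noting that every vector with $\tilde{u}(0)=\tilde{u}(n+1)=0$ extends to some $g\in H_{1}$ and conversely, and never mentions the degeneracy you raise. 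You are right that this is the delicate point, and right that surjectivity of the evaluation map (via piecewise-linear interpolation) handles one direction.

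However, your proposed repair of the other direction goes the wrong way and would fail. Take $M_{1}=\mathrm{span}\{g_{1},\dots,g_{k}\}$ where $g_{1}$ interpolates the first eigenvector of $-A_{n}$ (normalized so $\sum_{i}g_{1}(x_{i})^{2}\Delta x=1$) and $g_{2},\dots,g_{k}$ are bumps supported strictly inside a single subinterval, hence vanishing at every grid point. Then $\dim M_{1}=k$, but the constrained set $\{g\in M_{1}:\sum_{i}g(x_{i})^{2}\Delta x=1\}$ is $\{\pm g_{1}+\text{grid-null directions}\}$, and since $F_{n}$ depends only on grid values it is constant there, equal to the Rayleigh quotient of the first eigenvector, i.e.\ $\lambda_{1}^{(n)}$. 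The eigenvalues of this Jacobi matrix are simple, so for $k\geq 2$ this inner supremum is strictly below $\lambda_{k}^{(n)}$; no perturbation to an evaluation-injective subspace can avoid increasing it, because any such subspace has inner supremum at least $\lambda_{k}^{(n)}$ by Courant--Fischer. Read literally, the right-hand side of (\ref{EqFn}) therefore collapses to $\lambda_{1}^{(n)}$ for every $k$. The identity needs the infimum restricted to subspaces $M_{1}$ on which $g\mapsto(g(x_{1}),\dots,g(x_{n}))$ is injective (equivalently, whose grid image is $k$-dimensional); with that side condition your argument closes. Note this defect is present in the paper's own proof as well, which asserts the subspace correspondence without comment.
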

\begin{proof}
According to Courant-Fisher Theorem (see e.g. \cite{Zhang}, pp. 268), setting $u(0)=u(n+1)=0$, the eigenvalues of $-A_{n}$ can be calculated as
\begin{equation}\label{lambdan}
\lambda_{k}^{(n)}= \min_{M_{0}\subset \mathbb{R}^{n} \atop dim(M_{0})=k}\max_{u\in M_{0}\atop \|u\|=1}
\sum_{i=1}^{n}\left\{ \frac{-1}{(\Delta x)^{2}}(u(i+1)-2u(i)+u(i-1))-u(i)\frac{X_{i}}{\Delta x} \right\}u(i),
\end{equation}
for $n\geq k$. 

Notice that $1=\sum_{i=1}^{n}u(i)^{2}=\sum_{i=1}^{n}\tilde{u}^{2}(i)\Delta x$ with $u(i)=\tilde{u}(i)\sqrt{\Delta x}$. We can then substitute in (\ref{lambdan}), use that 
$-X_{i}\overset{(d)}{=}X_{i}$ and rewrite as done in \cite[Theorem 6]{Pacheco} to obtain that
\begin{eqnarray}
\lambda_{k}^{(n)}&\overset{(d)}{=}& \min_{M_{0}\subset \mathbb{R}^{n}\atop dim(M_{0})=k}\max_{\tilde{u}\in M_{0}\atop 1=\sum\tilde{u}(i)^{2}\Delta x}
\Bigg[\sum_{i=1}^{n} \left( \frac{ \tilde{u}(i+1)- \tilde{u}(i) }{\Delta x}\right)^{2}\Delta x + \sum_{i=1}^{n} \tilde{u}(i)^{2}X_{i} \nonumber\\
 &+& \frac{1}{\Delta x}\{ (\tilde{u}(1)-\tilde{u}(0))\tilde{u}(1)+ (\tilde{u}(n+1)-\tilde{u}(n))\tilde{u}(n) \} \Bigg].\nonumber
\end{eqnarray}

Since we always have $\tilde{u}(0)=\tilde{u}(n+1)=0$, there is $g\in H_{1}$ which coincides with the $n+2$ vector $\tilde{u}$ when evaluating at an equidistant partition $\Pi_{n+1}$. 
Conversely, for every $g\in H_{1}$ there exists a  $n+2$ vector whose entries are the values of $g$ at $\Pi_{n+1}$.  
Then we can change $\min\{M_{0}\subset \mathbb{R}^{n}, dim(M_{0})=k\}$ by $\inf\{M_{1}\subset H_{1}, dim(M_{1})=k\}$ and write down expression (\ref{EqFn}).
\end{proof}

\begin{remark}\label{RemMeaning}
We can give the following interpretation of $F_{n}(g)$ of previous result. 
For the first sum, project first $g$ into a piecewise linear function by joining the points $g(x_{0}),\ldots, g(x_{n+1})$ with straight lines 
(call this projection $\hat{g}$), and then calculate 
$$\int (\hat{g}^{\prime}(x))^{2}dx.$$ 
For the second sum, project first $g$ into a stepwise function with constant values given by $g(x_{0}),\ldots, g(x_{n+1})$ 
(call this projection $\bar{g}$), and then, taking into account Section \ref{SubSWC}, calculate 
$$\int (\bar{g}(x))^{2}dB(x).$$ 
For the last two terms we do not worry because they will vanish as the partition becomes finer; 
this is so because the quotients will converge to the derivatives and the evaluations of $g$ to zero.
\end{remark}

Our aim is to prove that when $n\to\infty$, $\lambda_{k}^{(n)}$ converges in distribution to (\ref{eigenHk}), which is
\begin{equation*}
\lambda_{k}=\inf_{M_{1}\subset H_{1}\atop dim(M_{1})=k}\sup_{g\in M_{1}\atop \|g\|=1} \int_{0}^{1}(g^{\prime}(x))^{2}dx+\int_{0}^{1}g^{2}(x)dB(x),
\end{equation*}
written shortly as
\begin{equation}\label{EqSupDisc}
\lambda_{k}=\inf_{M_{1}\subset H_{1}\atop dim(M_{1})=k} \sup_{g\in M_{1}\atop 1=\sum g(x_{i})^{2}\Delta x} F(g),
\end{equation}
where 
\begin{equation}\label{FormulaF}
F(g):=\int_{0}^{1}(g^{\prime}(x))^{2}dx+\int_{0}^{1}g^{2}(x)dB(x).
\end{equation}
Now we proceed to realize what really is the space where one is taking supremum. 

For any subspace $M_{1}\subset H_{1}$ of dimension $k$, it is known that the set 
$$S:=\{ g\in M_{1} : \|g\|=1 \} $$ 
is homeomorphic to the $k$ dimensional sphere 
$$S^{k-1}:=\left\{\alpha\in\mathbb{R}^{k}: \sum_{i=1}^{k}\alpha_{i}^{2}=1\right\},$$
which is denoted by the map $\gamma: S^{k-1}\to S$. 
We will check that the set 
$$
S_{n}:=\{ g\in M_{1} : \sum_{i=1}^{n} g(x_{i})^{2}\Delta x=1 \}
$$
is also homeomorphic to $S^{k-1}$.
The following result will tell us how $S_{n}$ becomes $S$ as the partition becomes finer, and ultimately that $F_{n}$ in (\ref{Fn}) converges pointwise to $F$ in (\ref{FormulaF}).

\begin{lemma}\label{LemaSup} 
Consider $M_{1}\subset H_{1}$ with $dim(M_{1})=k$.
Then, there is an homomeorphism $\gamma^{(n)}: S^{k-1}\to S_{n}$.
Furthermore, let $g:=\gamma(\alpha)$ and $g_{n}:=\gamma^{(n)}(\alpha),\ n=1,2,\ldots$ for each $\alpha\in S^{k-1}$. 
Then
\begin{equation*} 
F_{n} (g_{n})\to F( g ) \text{ in mean square as }  n\to\infty.
\end{equation*}
\end{lemma}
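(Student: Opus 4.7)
My plan is first to dispatch the homeomorphism assertion by a simple rescaling, and then to reduce the convergence statement to a piece-by-piece analysis of $F_n$. Fix an $L^{2}$-orthonormal basis $\{e_{1},\ldots,e_{k}\}$ of $M_{1}$, so that $\gamma(\alpha)=\sum_{i}\alpha_{i}e_{i}$. Set $Q_{n}(g):=\sum_{i=1}^{n}g(x_{i})^{2}\Delta x$, which is a Riemann sum for $\int_{0}^{1}g^{2}\,dx$. Because $H_{1}\hookrightarrow C[0,1]$, the map $g\mapsto Q_{n}(g)$ is a continuous quadratic form on the finite dimensional space $M_{1}$, and on the compact set $S$ it converges uniformly to $\|\cdot\|^{2}\equiv 1$; in particular $Q_{n}>0$ on $M_{1}\setminus\{0\}$ for $n$ sufficiently large. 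Define
\[
\gamma^{(n)}(\alpha):=\frac{\gamma(\alpha)}{\sqrt{Q_{n}(\gamma(\alpha))}}.
\]
Its inverse sends $h\in S_{n}$ to $\gamma^{-1}(h/\|h\|)$, and both maps are manifestly continuous; this settles the topological part.

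For the convergence claim, observe that $F_{n}$ and the constraint $Q_{n}$ are both $2$-homogeneous in $g$, so $g_{n}=\gamma(\alpha)/\sqrt{Q_{n}(\gamma(\alpha))}$ yields $F_{n}(g_{n})=F_{n}(g)/Q_{n}(g)$, while $Q_{n}(g)\to\|g\|^{2}=1$ deterministically by Riemann sum convergence for the continuous function $g^{2}$. Writing
\[
F_{n}(g_{n})-F(g)=\frac{F_{n}(g)-F(g)}{Q_{n}(g)}+F(g)\Bigl(\frac{1}{Q_{n}(g)}-1\Bigr),
\]
and noting that $F(g)\in L^{2}(\Omega)$ since it is a deterministic term plus the Wiener integral of a bounded function, a dominated-convergence argument reduces everything to proving $F_{n}(g)\to F(g)$ in mean square.

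I would then analyse the three blocks of $F_{n}(g)$ separately. The first sum equals $\int_{0}^{1}(\hat g'(x))^{2}\,dx$ where $\hat g$ is the piecewise linear interpolant of $g$ at the nodes of $\Pi_{n+1}$, so standard $H_{0}^{1}$ convergence of piecewise linear projections gives deterministic convergence to $\int_{0}^{1}(g'(x))^{2}\,dx$. The second sum equals $\int_{0}^{1}\bar g_{n}^{2}\,dB$ where $\bar g_{n}$ is the stepwise approximation described in Remark \ref{RemMeaning}; by It\^{o}'s isometry
\[
E\Bigl[\Bigl(\textstyle\int_{0}^{1}\bar g_{n}^{2}\,dB-\int_{0}^{1}g^{2}\,dB\Bigr)^{2}\Bigr]=\int_{0}^{1}(\bar g_{n}^{2}-g^{2})^{2}\,dx,
\]
which vanishes since $\bar g_{n}\to g$ uniformly by continuity of $g$ on $[0,1]$. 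Finally, because $g(x_{0})=g(x_{n+1})=0$, the last two terms collapse to $(g(x_{1})^{2}-g(x_{n})^{2})/\Delta x$; writing $g(x_{1})=\int_{0}^{\Delta x}g'(y)\,dy$ and applying Cauchy--Schwarz produces $g(x_{1})^{2}/\Delta x\le\int_{0}^{\Delta x}(g'(y))^{2}\,dy\to 0$ by absolute continuity of the integral, and symmetrically at the right endpoint.

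The main obstacle I anticipate is the It\^{o}-isometry step for the noisy second sum: one has to identify the raw Gaussian sum $\sum g(x_{i})^{2}X_{i}$ with a bona fide It\^{o} integral of a simple integrand against $B$ and then control $\|\bar g_{n}^{2}-g^{2}\|_{L^{2}}$ independently of the randomness. The remaining pieces are deterministic and reduce to standard $H_{0}^{1}$ approximation theory together with the Dirichlet condition $g(0)=g(1)=0$.
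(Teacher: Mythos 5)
Your proposal is correct, and while the overall strategy (turn the three blocks of $F_{n}$ into integrals of interpolants, use Riemann-sum convergence for the deterministic part and the It\^{o} isometry for the noise part) matches the paper's, your route to the homeomorphism and the reduction step are genuinely cleaner. The paper constructs $\gamma^{(n)}$ implicitly in coefficient space: it writes the constraint as $\alpha^{T}U_{n}U_{n}^{T}\alpha=1$, proves $U_{n}U_{n}^{T}\to I$ entrywise, deduces that the perturbed coefficients $\rho_{n}$ converge to $\alpha$, and then shows $g_{n}\to g$ and $g_{n}'\to g'$ in $L^{2}$ before attacking $F_{n}(g_{n})$ term by term. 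Your explicit rescaling $\gamma^{(n)}(\alpha)=\gamma(\alpha)/\sqrt{Q_{n}(\gamma(\alpha))}$ is the same map in disguise ($Q_{n}(\sum\alpha_{i}e_{i})=\alpha^{T}U_{n}U_{n}^{T}\alpha$), but making it explicit lets you exploit the $2$-homogeneity of $F_{n}$ to write $F_{n}(g_{n})=F_{n}(g)/Q_{n}(g)$ with $Q_{n}(g)\to 1$ deterministically, so the whole perturbation analysis collapses to proving $F_{n}(g)\to F(g)$ at the fixed function $g$ --- a real simplification over tracking $\rho_{n}\to\alpha$ through each term. You are also more careful than the paper on two points: you note that $Q_{n}$ is only positive definite on $M_{1}$ for $n$ large (for $n<k$ the set $S_{n}$ need not be a sphere at all, a caveat the paper omits), and you give the explicit Cauchy--Schwarz bound $g(x_{1})^{2}/\Delta x\le\int_{0}^{\Delta x}(g'(y))^{2}dy$ for the boundary terms, where the paper only gestures (in Remark \ref{RemMeaning}) that they vanish. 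The one shared implicit assumption, which you correctly flag as the delicate point, is the coupling of the discrete Gaussians $X_{i}$ to increments of the single Brownian motion $B$ appearing in $F$; this is needed for mean-square (rather than merely distributional) convergence, and both you and the paper rely on the identification made in Section \ref{SubSWC} without spelling out the joint construction across $n$.
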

\begin{proof}
Let $E:=\{e_{1},\ldots,e_{k}\}$ be a orthonormal base of $M_{1}$.
Let $g_{n}\in M_{1}$ with $\sum_{i=1}^{n}g_{n}^{2}(x_{i})\Delta x=1$. 
Since $E$ is a base, there is $\alpha_{n}:=(\alpha_{1}^{(n)},\ldots,\alpha_{k}^{(n)})^{T}$ such that $g_{n}=\sum_{i=1}^{k}\alpha_{i}^{(n)} e_{i}$. 
Thus
\begin{eqnarray*}
1&=& \sum_{i=1}^{n}\left(\sum_{j=1}^{k}\alpha_{j}^{(n)}e_{j}(x_{i})\right)^{2}\Delta x= \alpha^{T}_{n}U_{n}U_{n}^{T}\alpha_{n},
\end{eqnarray*}
where 
\begin{equation*}
\label{Tmatrix}
U_{n}:=\sqrt{\Delta x}\left[
\begin{array}{ccccc}
e_{1}(x_{1})  &  &   \ldots          &          &  e_{1}(x_{n}) \\
\vdots &  &   & & \\
e_{k}(x_{1})      &           &    \ldots              &  &  e_{k}(x_{n})
\end{array}
\right].
\end{equation*}
Define $f_{n}(\alpha):= \alpha^{T}U_{n}U_{n}^{T}\alpha$, which is a $\mathbb{R}^{k}\to\mathbb{R}$ continuous function. 
Since $f_{n}$ has $k$ parameters, the set $\tilde{S}_{n}:=f_{n}^{-1}(1)\subset \mathbb{R}^{k}$ is homeomorphic to $S_{n}$and to $S^{k-1}$.
These facts help to see that there is an homomeorphism $\gamma^{(n)}:S_{n}\to S^{k-1}$.

Now, for $1\leq i,j\leq k$ notice that the $(i,j)$-entry of the matrix $U_{n}U_{n}^{T}$ is of the form $\sum_{r=1}^{n} e_{i}(x_{r})e_{j}(x_{r})\Delta x$.
So
\begin{equation*}
\lim_{n\to\infty} U_{n}U_{n}^{T} (i,j)=
\left\{
\begin{array}{cc}
0 & i\neq j\\
1 & i=j.
\end{array}
\right.
\end{equation*}
The convergence is uniform in $i$ and $j$ because the number of pairs $(i,j)$ is finite.
Then
\begin{equation}\label{Uconv}
\alpha^{T}U_{n}U_{n}^{T} \alpha  \to \|\alpha\| \text{ as } n\to\infty,
\end{equation}
for each $\alpha \in \mathbb{R}^{k}$. 
Therefore $\tilde{S}_{n}$ is deformed into $ S^{k-1}$ when $n\to \infty$. 
More precisely, let $\alpha\in S^{k-1}$ and take the associated 
$\rho_{n}:= (\rho_{1}^{(n)}, \ldots, \rho_{k}^{(n)} )^{T}\in \tilde{S}_{n}$. 
Then $\rho_{n}\to \alpha$ as $n\to \infty$ in the Euclidean norm.

From the above convergence, let us check that $g_{n}\overset{L_{2}}{\to}g$. 
Indeed, this is so because
\begin{eqnarray*}
\int_{0}^{1}(g_{n}(x)-g(x))^{2}dx
&=&\int_{0}^{1}\left( \sum_{i=1}^{k}\rho_{i}^{(n)}e_{i}(x)-\sum_{i=1}^{k}\alpha_{i}e_{i}(x) \right)^{2}dx\\
&\leq& k \sum_{i=1}^{k} \left( \rho_{i}^{(n)} -\alpha_{i}\right)^{2}\int_{0}^{1}e_{i}^{2}(x)dx
=k \sum_{i=1}^{k}\left( \rho_{i}^{(n)}-\alpha_{i}\right)^{2}.
\end{eqnarray*}
In a similar way we can prove that $g_{n}^{\prime}\overset{L_{2}}{\to}g^{\prime}$. 
Furthermore, recalling Remark \ref{RemMeaning}, one can check that
$\hat{g}_{n}\overset{L_{2}}{\to}g$ and $\bar{g}_{n}\overset{L_{2}}{\to}g$, which also helps to show that
\begin{equation*}
\int (\hat{g}_{n}^{\prime}(x))^{2}dx\to \int (g^{\prime}(x))^{2}dx, 
\text{ and that }
\int (\hat{g}_{n}(x))^{2}dB(x)\overset{L_{2}}{\to} \int (g(x))^{2}dB(x).
\end{equation*}
Therefore for each $\alpha\in S^{k-1}$, $F_{n}(g_{n})\overset{L_{2}}{\to}F(g)$ as $n\to\infty$;
\end{proof}


Now we need the following ingredient. 
Let us now identify the set where the infimum is being taken in the representations (\ref{EqFn}) and (\ref{EqSupDisc}) of the eigenvalues. 
That is, in the set
$$\mathcal{M}:=\{M_{1}\subset H_{1}: dim(M_{1})=k\}.$$ 

Let $B$ be the unit sphere in $l_{2}$. 
It turns out that $\mathcal{M}$ has naturally a topology inherited from the product-topology of 
\begin{equation}\label{SetBk}
B_{k}:=\underbrace{B\times \ldots \times B}_{k\ \text{ times }}.
\end{equation}
More specifically:
\begin{lemma}\label{lemaM}
The set $\mathcal{M}$ has a relatively compact topology with a countable dense subset.
\end{lemma}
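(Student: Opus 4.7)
The plan is to identify $\mathcal{M}$ with a subset of the compact metrizable space $B_k$ via Fourier coefficients in a fixed countable orthonormal basis of $H_{1}$, and then to produce a countable dense subset using subspaces whose defining vectors have rational, finitely supported expansions.

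First I would recall that $H_{1}$, being a separable Hilbert space under the Sobolev norm (as noted just before Definition \ref{DefL}), admits a countable orthonormal basis $\{\phi_j\}_{j\ge 1}$. For any $M_{1}\in\mathcal{M}$ pick an orthonormal basis $e_1,\ldots,e_k$ of $M_{1}$ and expand $e_i=\sum_{j\ge 1} a_{ij}\phi_j$. Since $\sum_j a_{ij}^{2}=\|e_i\|^{2}=1$, each row $(a_{ij})_{j\ge 1}$ lies in $B$, so the $k$-frame $(e_1,\ldots,e_k)$ corresponds to a point of $B_k$. Transporting the product topology back through this correspondence equips $\mathcal{M}$ with a topology.

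Next I would invoke the standard fact that the closed unit ball of a separable Hilbert space is weakly compact and metrizable in the weak topology; hence $B$ with the weak topology is compact metrizable, and so is $B_k=B\times\cdots\times B$ with the product topology. Consequently the image of $\mathcal{M}$ inside $B_k$ is relatively compact. For the countable dense subset I would take the collection $\mathcal{M}_0$ of $k$-dimensional subspaces spanned by $k$ linearly independent vectors of the form $\sum_{j=1}^{N} q_j \phi_j$ with $N\in\mathbb{N}$ and $q_j\in\mathbb{Q}$; this family is manifestly countable. Given $M_1\in\mathcal{M}$ with basis $e_1,\ldots,e_k$, truncating each expansion $e_i=\sum_j a_{ij}\phi_j$ at level $N$ and replacing the $a_{ij}$ by rationals yields vectors $\tilde e_i\in\mathrm{span}\{\phi_1,\ldots,\phi_N\}$; taking $N$ large and the rationals sufficiently close makes the resulting frame arbitrarily close to $(e_i)_i$ in the product topology, and the span lies in $\mathcal{M}_0$.

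The main obstacle is the basis-dependence of the map $\mathcal{M}\to B_k$: strictly speaking it is defined on orthonormal frames rather than on subspaces. I would handle this either by passing to the quotient by the natural action of the orthogonal group $O(k)$ on orthonormal $k$-frames, or by reformulating convergence intrinsically (for instance, in terms of convergence of the associated orthogonal projections on $H_1$). Either reformulation preserves both the relative compactness coming from $B_k$ and the separability coming from $\mathcal{M}_0$, which yields the lemma.
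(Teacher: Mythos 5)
Your proposal is correct and follows the same overall architecture as the paper: fix a countable orthonormal basis of $H_{1}$, encode a $k$-dimensional subspace by the coefficient sequences of a spanning $k$-tuple to land in $B_{k}$, get relative compactness from the ambient product, get separability, and pass to the quotient that identifies tuples spanning the same subspace. The differences are in how the two key facts are certified. For compactness the paper goes through the homeomorphism of $B$ with $P=(-1,1)\times(-1,1)\times\cdots$ (Bessaga) and then Tychonoff applied to $Q=[-1,1]\times[-1,1]\times\cdots$; you instead invoke weak compactness and weak metrizability of the closed unit ball of a separable Hilbert space. These amount to the same topology on bounded sets (coordinatewise convergence of coefficient sequences is weak convergence there), but your phrasing is arguably cleaner, since relative compactness depends on the embedding and not just on the homeomorphism type of $B$. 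One small slip: the unit \emph{sphere} $B$ is not weakly closed, hence not weakly compact --- only the closed ball is --- but since the lemma only claims \emph{relative} compactness, your conclusion survives. For separability the paper cites second countability of $Q$ and lets the quotient inherit it, whereas you exhibit an explicit countable dense family (spans of finitely supported rational combinations of the basis vectors) and verify density by truncation; this is more concrete and equally valid, provided you note that for $N$ large the truncated vectors remain linearly independent so the span is genuinely $k$-dimensional. Finally, you flag the frame-versus-subspace ambiguity and propose quotienting by the $O(k)$-action or using orthogonal projections; the paper handles the same issue by deleting the linearly dependent tuples ($B_{-k}$) and quotienting $V_{k}=B_{k}-B_{-k}$ by the relation of generating the same subspace. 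Either resolution preserves the two properties claimed, so your argument establishes the lemma.
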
  
\begin{proof}
Let $V:=\{ h_{1}, h_{2},\ldots\}$ be a countable orthonormal base of $H_{1}$.
Take any $M_{1}\in \mathcal{M}$, which is generated by $k$ elements of $H_{1}$ of norm 1, say $E:=\{e_{1},\ldots,e_{k}\}$. 
Each $e\in E$ is a linear combination of $V$, that is $e=\sum_{i=1}^{\infty}\alpha_{i}h_{i}$, where 
$\alpha=(\alpha_{1},\alpha_{2},\dots)$ is an element of the set of square summable sequences in $l_{2}$ such that $\sum_{i=1}^{\infty}\alpha_{i}^{2}=1$.
Name as $B\subset l_{2}$ this set of $\alpha$'s, i.e. the unit sphere. 

Take $B_{k}$ in (\ref{SetBk}) and notice that for any point of $B_{k}$ one can construct an associated subspace $M_{1}$ of dimension $\leq k$, and any $M_{1}\in \mathcal{M}$ of dimension $\leq k$ can be associated to a point in $B_{k}$. 
However, two different points of $B_{k}$ can generate the same base $E$. 
Let us give the precise description of $\mathcal{M}$. 

First, let $B_{-k}$ be the set of $k$-tuples in $B_{k}$ that produce a base $E$ with $dim(E)<k$ (i.e. linearly dependant $k$-tuples), and define $V_{k}:=B_{k}- B_{-k}$. However, given a tuple $\hat{\alpha}\in B_{k}$ such that the generated base is of dimension $<k$, one can realize that any open set of $B_{k}$ containing $\hat{\alpha}$ has a tuple that generates a $k$-dimensional base. 
This implies that $V_{k}$ is dense in $B_{k}$.
Now, identify in equivalent classes from $V_{k}$ all the $k-$tuples that produce the same base $E$ with $dim(E)=k$ (i.e. permutations of a $k$-tuple), 
so that in the end $\mathcal{M}$ is a quotient space of $V_{k}$.

We proceed to identify what the space $B$ really is. 
It turns out that $B$ is homeomorphic to 
$$P:=(-1,1)\times (-1,1)\times\ldots$$ 
with the product-topology, see Proposition 10.1 of \cite{Bessaga}; 
in that reference the set $P$ is described in the Introduction and $B$ in page 10.1. 

It is known that the closure of $P$ is 
$$Q:=[-1,1]\times [-1,1]\times \ldots,$$ 
see Theorem 19.5 in \cite{Munkres}. 
Therefore, by the Tychonoff Theorem, $B$  is relatively compact, and therefore so is $B_{k}$. 
Finally, it is known that $Q$ is second-countable, which implies that it is separable. 
These properties of $B_{k}$ are inherited by the quotient space $V_{k}$, that is to say by $\mathcal{M}$.  
\end{proof}


\bigskip
Now, let us join previous pieces for the proof of our main theorem.
\bigskip

\begin{proof} (\textbf{of Theorem \ref{TheoremMain}}) From Lemma \ref{LemaFn} our aim is to see that 
\begin{equation*}
\lambda_{k}^{(n)}\to\lambda_{k}\text{ in distribution as } k\to\infty, 
\end{equation*}
for each $k=1,2,\ldots$, where the eigenvalues admit the representations (\ref{EqFn}) and (\ref{EqSupDisc}).

In Lemma \ref{LemaSup} we proved that $(\forall \alpha\in S^{k-1}) F_{n}(g_{n})\to F(g)$, where $g_{n}$ and $g$ are defined through plugging
$\alpha$ into the homomorphisms $\gamma_{n}$ and $\gamma$, respectively.  
Abusing of the notation let us shortly write $F_{n}(\alpha):=F_{n}(g_{n})$ and $F(\alpha):=F(g)$. 

From this convergence, for each fixed $\alpha$, we can extract almost surely convergent subsequences. 
Now, let us go a step further. 
Let $D$ be a dense numerable subset of $S^{k-1}$. 
Using the Cantor's diagonal procedure, we can extract a subsequence $n_{1},n_{2},\ldots$ such that almost surely
\begin{equation}\label{EqConFD}
\forall \alpha\in D,\ F_{n_{j}}(\alpha)\to F(\alpha),\ as\ j\to\infty.
\end{equation}
This is performed in the following way. 
Consider that $D=\{\alpha_{1}, \alpha_{2},\ldots\}$.
For $\alpha_{1}$ take the infinite set $I_{1}\subset \mathbb{N}$ such that $F_{n}(\alpha_{1})\to F(\alpha_{1})$ when $n\in I_{1}$ and $n\to\infty$. 
Now, call $n_{1}$ the first element of $I_{1}$. 
In a similar way, for $\alpha_{2}$ take an infinite set $I_{2}\subset I_{1}$ such that $F_{n}(\alpha_{2})\to F(\alpha_{2})$ when $n\in I_{2}$ and $n\to\infty$. 
Now, call $n_{2}$ the first element of $I_{2}$ with $n_{2}>n_{1}$. 
Continuing with this procedure one can construct a sequence of numbers $\{n_{j}\}_{j=1}^{\infty}$ where (\ref{EqConFD}) happens.


Now, since $D$ is dense and relatively compact, it should happen that almost surely
\begin{equation}\label{LimitFtilde}
\tilde{F}_{n_{j}}(M_{1}):=\sup_{\alpha\in S^{k-1}}F_{n_{j}}(\alpha)\to \tilde{F}(M_{1}):=\sup_{\alpha\in S^{k-1}}F(\alpha),
\ j\to\infty.
\end{equation}
The writing $\tilde{F}(M_{1})$ is to emphasize that this is done for $M_{1}$ fixed.

To see (\ref{LimitFtilde}), suppose that $\lim_{j}\tilde{F}_{n_{j}}(M_{1})\neq \tilde{F}(M_{1})$. 
Since $D$ is dense, we can take $\alpha_{n_{j}}^{*}\in D,\ j=1,2,\ldots$  such that $F_{n_{j}}(\alpha_{n_{j}}^{*})$ is as close as we wish to $\tilde{F}_{n_{j}}(M_{1})$, 
and we can also take $\alpha^{*}\in D$ such that we are as close as we wish to $\tilde{F}(M_{1})$. 
Hence, from the relative compactness of $D$, one can extract a subsequence $\{n_{r}\}_{r=1}^{\infty}$ from $\{n_{j}\}_{j=1}^{\infty}$ such that the sequence $\alpha_{n_{r}}^{*},\ r=1,2,\ldots$ converges. 
And from this one can derive a contradiction for the very definition of the supremum, either for 
$\sup F_{n_{r}}$ for some $r$ big enough or for $\sup F$. 

So far we have proved that for any $M_{1}\in \mathcal{M}$ fixed, there is a subsequence $\{n_{r},\ r=1,2,\ldots\}$ from the original one $n=1,2,\ldots$, such that
$\tilde{F}_{n_{r}}(M_{1})\to \tilde{F}(M_{1})$ almost surely, as $r\to\infty$. 
Going one step further, we now want to see if we can extract a new subsequence $\{n_{m}\}_{m=1}^{\infty}$ 
from $\{n_{r}\}_{r=1}^{\infty}$ such that
\begin{equation}\label{EqInf}
\inf_{M_{1}\in \mathcal{M}} \tilde{F}_{n_{m}}(M_{1}) \to \inf_{M_{1}\in \mathcal{M}}\tilde{F}(M_{1}), \ m\to\infty,
\end{equation}
almost surely.
And here is where we use the Lemma \ref{lemaM}, as it tells us that $\mathcal{M}$ can be seen as a relatively compact space with a countable dense subset. 
If we can check that the functions $\tilde{F}_{n_{m}}$ and $\tilde{F}$ are continuous, then we can use the same method for the supremum to extract the desired subsequence.
 
Let us argue how $\tilde{F}$ is continuous, because for $\tilde{F}_{n_{m}}$ is the same idea. 
Suppose that we are told that $M_{1}^{(n)}\to M_{1},\ n\to\infty$. 
When looking at the proof of Lemma \ref{lemaM}, we see that each $M_{1}^{(n)}$ is constructed using the same base $V$, and this construction is through elements in $l_{2}$, the coefficients of the linear combination. 
These coefficients converge in $l_{2}$ to those determining $M_{1}$.
This tells us that there are homeomorphisms $h_{n}:S\to S^{(n)}$, where $S^{(n)}$ is the unitary ball of $M_{1}^{(n)}$ and $S$ of $M_{1}$, such that $h_{n}(g)\to g$ for each $g\in S$. 
Therefore, using the same method for the supremum described above,
\begin{equation*}
\tilde{F}(M_{1}^{(n)})=\sup_{ g\in S^{(n)}}F(g)=\sup_{g\in S}F(h_{n}(g))\to \sup_{g\in S}F(g)= \tilde{F}(M_{1}),\ n\to\infty.
\end{equation*}
In the same manner we establish the continuity of $\tilde{F}_{n_{m}}$, which ultimately validates the limit (\ref{EqInf}).

The general conclusion is that given any sequence from $\{\lambda_{k}^{(n)},\ n=1,2,\ldots\}$, we can extract a subsequence that converges in distribution to the same law, namely the one of $\lambda_{k}$. 
Therefore, due to Theorem 2.6 of \cite{Billingsley}, the whole sequence converges in distribution.
\end{proof}

\bigskip

\textbf{Acknowledgements}. 
The author would like to thank Jes\'{u}s Gonz\'{a}lez, his former teacher in topology, for the help with some topological issues.


\end{document}